\documentclass[onefignum,onetabnum]{siamart220329}

\usepackage{amsmath,amssymb,amsfonts}
\usepackage{graphicx}
\usepackage{booktabs}
\usepackage{url}

\makeatletter
\@ifundefined{cref@constructprefix}{}{%
  \def\refstepcounter@optarg[#1]#2{%
    \cref@old@refstepcounter{#2}%
    \cref@constructprefix{#2}{\cref@result}%
    \@ifundefined{cref@#1@alias}%
      {\def\@tempa{#1}}%
      {\def\@tempa{\csname cref@#1@alias\endcsname}}%
    \protected@edef\cref@currentlabel{%
      [\@tempa][\arabic{#2}][\cref@result]%
      \csname p@#2\endcsname\csname the#2\endcsname}%
  }%
}
\makeatother

\newcommand{\C}{\mathbb{C}}
\newcommand{\eps}{\varepsilon}
\newcommand{\dA}{\,dA}

\graphicspath{{figures/}{./}}

\begin{document}

\title{Cubature from rational approximation}

\author{Gentian Zavalani\thanks{Technische Universit{\"a}t Dresden, %
  Institute of Numerical Mathematics, %
  01062 Dresden, Germany
(\email{gentian.zavalani@tu-dresden.de}).}}

\headers{Cubature from rational approximation}{G.\ Zavalani}

\maketitle

\begin{abstract}
We present a numerical construction of cubature rules for area
integrals of analytic functions over planar domains with rectifiable
Jordan boundary. The starting point is the Cauchy--Green identity.
Given a weight $w$, we choose a $\bar\partial$-antiderivative $W$ and
reduce the area integral to a contour integral involving the boundary
values of $W$. These values are then approximated by a rational
function with free poles, computed by the AAA algorithm. The poles
inside the domain become cubature nodes, the corresponding residues
become weights, and the boundary residual controls the error through
an a posteriori estimate, rigorous once the continuous boundary
residual is bounded. The same rule admits a dual reading, as the exact
integral of a rational interpolant to the integrand, the area analogue
of the one-dimensional interpolatory viewpoint.
The numerical examples recover the disk mean-value rule and the
focal-segment rule of the ellipse to machine precision, reproduce the
exact finite quadrature identities of quadrature domains with both
separated and confluent nodes, and evaluate logarithmic and Cauchy
volume potentials from boundary data alone. The interior poles trace
analytic skeletons that we identify tentatively with the mother bodies
of potential theory, along with image points that appear without being
imposed; for the square the observed convergence is root-exponential.
\end{abstract}

\begin{keywords}
cubature, rational approximation, AAA algorithm, Cauchy--Green
formula,\linebreak
Schwarz function, quadrature domains, mother body, lightning
approximation, volume potentials
\end{keywords}

\begin{MSCcodes}
65D32, 41A20, 30E10, 31A05
\end{MSCcodes}

\section{Introduction}
\label{sec:intro}

In a recent paper, Horning and Trefethen showed how a great variety of
quadrature formulas in one dimension can be generated from rational
approximations of the Cauchy transform of a weight function
\cite{HorningTrefethen2026}. The poles of the rational function become
the quadrature nodes and the residues become the weights, and the whole
construction lives on arcs and contours in the complex plane. This
paper asks the corresponding two-dimensional question. Can quadrature
rules for \emph{area} integrals,
\begin{equation}
\label{eq:areaint}
I \;=\; \iint_\Omega f(z)\, w(z, \bar z) \dA ,
\end{equation}
over a domain $\Omega \subset \C$ with rectifiable Jordan boundary
$\gamma = \partial\Omega$, be generated in the same spirit? Throughout,
we assume that $f$ is analytic in a neighbourhood of $\bar\Omega$.

Remarkably, no new transform is required. A classical identity already
converts \eqref{eq:areaint} into a problem of approximation on the
boundary. Suppose $W(z,\bar z)$ satisfies
$\partial W/\partial\bar z=w$. Then the Cauchy--Green identity, which
underlies the Cauchy--Pompeiu formula \cite{Pompeiu}, gives, for $f$
analytic in a neighbourhood of $\bar\Omega$,
\begin{equation}
\label{eq:pompeiu}
\iint_\Omega f(z)\, w(z, \bar z) \dA
\;=\; \frac{1}{2i} \oint_\gamma f(z)\, W(z, \bar z)\, dz .
\end{equation}
The restriction of $W$ to $\gamma$ is known boundary data. In the
simplest case of a pure area integral we have $w \equiv 1$ and
$W = \bar z$, so the data is nothing but the complex conjugate of the
boundary parametrization. Now let $\rho$ be a rational function with
simple poles $z_k$ that approximates this data on $\gamma$. The poles
inside $\Omega$ contribute residues, while the poles outside $\Omega$,
together with any polynomial part, integrate to zero against analytic
$f$. Residue calculus thus turns \eqref{eq:pompeiu} into a
\emph{cubature rule},
\begin{equation}
\label{eq:rule}
I_n \;=\; \pi \sum_{z_k \in \Omega} c_k\, f(z_k),
\end{equation}
whose nodes are the interior poles and whose error is controlled
entirely by the quality of the fit on the boundary
(Theorem~\ref{thm:cert}). We compute the poles with the AAA algorithm
\cite{NST2018,NakatsukasaTrefethen2025,TrefethenWilber2025} and the
coefficients by a linear least-squares fit on the boundary samples. We
call \eqref{eq:rule} the \emph{AAA cubature rule}.

As in the one-dimensional case, the nodes and weights depend only on
$W$ and not on the integrand. Once $W|_\gamma$ has been approximated,
the same rule applies to every analytic $f$ for which
\eqref{eq:pompeiu} holds. The antiderivative is not unique: $W+h$
serves equally well for any analytic $h$, since
$\oint_\gamma f(z)h(z)\,dz=0$ leaves the exact contour integral
unchanged. At a finite rational degree, however, different choices of
$W$ lead to different numerical fits and hence to different computed
rules, and in each example below we simply use the natural choice,
stated explicitly. We make no claim to a general-purpose formula for
nonanalytic integrands, and the weights need not be positive or even
real.

It is worth emphasizing that the quantity entering the rigorous bound
lives entirely on the boundary: the residual in Theorem~\ref{thm:cert}
is $\|\rho-W\|_{\gamma,\infty}$, and nowhere do we assume anything about
$W$ in the interior of $\Omega$. The sampled validation residual used in
the computations is an a posteriori indicator of this quantity; it
becomes a certificate only once the continuous boundary supremum has
been bounded.

Where do the nodes end up? Their location is governed by the analytic
continuation of the boundary data into $\Omega$. When $w \equiv 1$ and
$\gamma$ is real-analytic, that continuation is the Schwarz function
$S(z)$ of the curve, the analytic function that matches $\bar z$ on
$\gamma$ \cite{Davis1974}. One therefore expects the near-best poles to
delineate the singularities of $S$ inside $\Omega$, tracing out what
potential theorists call a mother body, an analytic skeleton of the
domain \cite{Zidarov,Gustafsson1998,GustafssonPutinar}. Domains whose Schwarz function is
actually meromorphic in $\Omega$ are the \emph{quadrature domains} of
Aharonov and Shapiro
\cite{AharonovShapiro1976,Sakai1982,GustafssonShapiro2005}; these are
the domains that satisfy exact finite quadrature identities. Read in
this light, the construction is a direct numerical route from boundary
data to approximate quadrature identities. The inverse problem, that of
reconstructing a domain from its moments, has a substantial literature
\cite{GHMP2000,GPSS2009}, and one may regard what we do here as its
forward counterpart.

The work closest to ours is Trefethen's numerical computation of the
Schwarz function \cite{Trefethen2025Schwarz}, which applies AAA to
$\bar z$ on a curve and reads off branch cuts from the strings of poles
that appear. Horning and Trefethen \cite{HorningTrefethen2026}, as
already noted, obtain one-dimensional quadrature from rational
approximation of a Cauchy transform. The present paper joins these two
ideas through the Cauchy--Green identity: rational approximation of a
$\bar\partial$-antiderivative on the boundary produces a weighted
cubature rule whose error is controlled by the boundary residual.

Boundary reduction is of course not new. It underlies Gauss--Green
cubature \cite{SommarivaVianello2009}, which applies product Gauss
rules along the boundary, and connections between approximation and
analytic skeletons appear both in the zeros of Bergman polynomials
\cite{GPSS2009} and in explicit constructions of mother bodies for
special geometries \cite{Gustafsson1998,SavinaSterninShatalov2005}.
What rational approximation adds is a direct numerical construction that
attaches weights to the interior poles.

Throughout, we test the construction against exact identities,
independent reductions, adaptive area integration, and two-resolution
reference computations. Section~\ref{sec:classical} treats the
classical examples: it recovers the disk mean-value rule and the
focal-segment reduction of an ellipse, and it exhibits a candidate
five-armed skeleton for a starfish domain. Section~\ref{sec:qd} turns
to quadrature domains, where two examples separate the well-conditioned
case of distinct nodes from a confluent pair that represents a
derivative functional. Section~\ref{sec:square} takes up the square,
with its corner clustering and root-exponential convergence. Finally,
section~\ref{sec:potential} shows how the very same boundary
construction handles logarithmic and Cauchy volume potentials, for
targets both inside the domain and just outside it.

\section{The construction}
\label{sec:construction}

We parametrize $\gamma$ by $z(t)$, $t \in [0, 2\pi)$, positively
oriented, and write $\omega(t)=W(z(t),\overline{z(t)})$ for the boundary
data. We sample $\gamma$ at $M$ parameter values, taking care that the
grid resolves $\omega$; a near-boundary feature of scale $\delta$, in
particular, calls for spacing below $\delta$. Run to a prescribed
maximal degree with the sign-blended weight choice of
\cite{TrefethenWilber2025}, the option we use in all computations
reported here, AAA supplies a set of candidate poles. We discard the
nonfinite ones, and any pole within $10^{-8}d_M$ of a boundary sample,
where $d_M=\max_{j,k}|z_j-z_k|$ is the sampled boundary diameter. The
remaining poles are classified as interior or exterior with respect to
the sampled boundary, and we then solve the least-squares problem
\begin{equation}
\label{eq:ls}
\min_{c,\, d,\, p}\;
\sum_{j=1}^{M} \Bigl|
\sum_{z_k \in \Omega} \frac{c_k}{z_j - z_k}
+ \sum_{z_k \notin \Omega} \frac{d_k}{z_j - z_k}
+ \sum_{\ell=0}^{L} p_\ell\, z_j^\ell
- \omega_j \Bigr|^2 ,
\end{equation}
for a modest polynomial degree $L$. Although \eqref{eq:ls} is displayed
in an unscaled basis, the code works with $(z_j/s)^\ell$,
$s=\max_j|z_j|$, and rescales every column of the matrix to unit
2-norm. Writing $\sigma_1\geq\cdots$ for the singular values of this
scaled matrix, the rank-revealing solve keeps those with
\[
  \sigma_j>\tau, \qquad
  \tau=\max\{M,N\}\,\operatorname{eps}(\sigma_1),
\]
where $N$ is the number of columns and \texttt{eps} has its usual
MATLAB meaning, and the coefficients are returned in the original
partial-fraction basis. The exterior poles and the polynomial improve
the fit but integrate to zero against analytic integrands, so only the
interior coefficients survive into $I_n=\pi\sum c_kf(z_k)$. A pleasant
side benefit of fitting directly in this way is that we never have to
convert a clustered barycentric approximant into pole--residue form.

When a separate validation grid is available, we write $\eps_{\rm val}$
for its maximum residual. A convenient global measure of the
cancellation in the resulting rule is the quantity
\[
  \Lambda_n=\frac{\sum_k|\lambda_k|}
                   {\operatorname{area}(\Omega)},
  \qquad \lambda_k=\pi c_k.
\]
For a positive rule that reproduces the area we have $\Lambda_n=1$,
whereas large values warn of sensitivity to perturbations and of
cancellation. We stress that this is a diagnostic, not a stability
theorem.

In practice, when the exact integral is unknown, we raise the AAA
degree until $\eps_{\rm val}$ reaches the requested tolerance and stays
there as the validation grid is refined. For a given integrand,
$\tfrac12|\gamma|\eps_{\rm val}\max_\gamma|f|$ is then an a posteriori
error indicator, but not a rigorous bound, unless the continuous
residual itself has been bounded. We refine the fitting grid whenever
the fitting and validation residuals begin to separate, or a
near-boundary feature is left unresolved. And if $\Lambda_n$ grows
rapidly, or a confluent cluster appears, we take that as a signal that
the rule should be evaluated through the local moments of
section~\ref{sec:qd}; we claim no universal threshold for when this
happens.

\begin{theorem}\label{thm:cert}
Let $\Omega$ be a bounded domain whose boundary $\gamma$ is a
rectifiable Jordan curve, positively oriented, let $f$ be analytic in
a neighbourhood of $\bar\Omega$, and let $W$ be a
$\bar\partial$-antiderivative of $w$ for which the complex Green
identity \eqref{eq:pompeiu} holds. Let
\[
\rho(z)
= \sum_{z_k \in \Omega} \frac{c_k}{z-z_k}
+ \sum_{z_k \notin \Omega} \frac{d_k}{z-z_k}
+ p(z)
\]
be a rational approximant to $W$ on $\gamma$, where all poles are
simple, no pole lies on $\gamma$, and $p$ is a polynomial. Set
$\eps = \max_{z \in \gamma} |\rho(z) - W(z,\bar z)|$. Then the rule
$I_n = \pi \sum_{z_k \in \Omega} c_k f(z_k)$ satisfies
\[
|I - I_n| \;\le\; \tfrac{1}{2}\, |\gamma| \, \eps \,
\max_{z \in \gamma} |f(z)| ,
\]
where $|\gamma|$ is the length of $\gamma$.
\end{theorem}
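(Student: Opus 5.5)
The plan is to split $I$ into an exactly computable part coming from $\rho$ and a remainder controlled by $\rho-W$ on $\gamma$. By hypothesis \eqref{eq:pompeiu} holds for $W$, so
\[
I=\frac{1}{2i}\oint_\gamma f(z)\,\rho(z)\,dz+\frac{1}{2i}\oint_\gamma f(z)\bigl(W(z,\bar z)-\rho(z)\bigr)\,dz .
\]
I will show that the first term equals $I_n$ exactly. The second term is then $I-I_n$, and it can be bounded directly.

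For the first term I would use the partial-fraction form of $\rho$ and treat each piece separately. The function $fp$ is analytic in a neighbourhood of $\bar\Omega$. For a pole $z_k\notin\bar\Omega$ (the hypothesis that no pole lies on $\gamma$ makes every exterior pole lie outside $\bar\Omega$), the function $f(z)/(z-z_k)$ is also analytic in a neighbourhood of $\bar\Omega$. Cauchy's theorem should then make these contour integrals vanish. For an interior pole $z_k\in\Omega$, Cauchy's integral formula gives
\[
\oint_\gamma \frac{f(z)}{z-z_k}\,dz=2\pi i\,f(z_k),
\]
so that piece contributes $\frac{1}{2i}\cdot 2\pi i\,c_k f(z_k)=\pi c_k f(z_k)$. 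Summing over the pieces gives $\frac{1}{2i}\oint_\gamma f\rho\,dz=I_n$.

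The remainder is estimated by the standard ML-inequality. Since $|W-\rho|\le\eps$ on $\gamma$ and $|f|\le\max_\gamma|f|$ there, and since $|dz|$ integrates to $|\gamma|$ on a rectifiable curve,
\[
|I-I_n|\le\tfrac12\,|\gamma|\,\eps\,\max_\gamma|f| .
\]

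The main obstacle is justifying Cauchy's theorem and formula on a merely rectifiable Jordan curve, not on a piecewise smooth one. My first approach is to use the general homology (winding-number) version of Cauchy's theorem for rectifiable closed curves in an open set $U\supset\bar\Omega$ where the integrand is analytic. This requires the winding number of $\gamma$ to be $0$ about every point outside $U$ (those points lie outside $\bar\Omega$), and $1$ about interior points for positive orientation. Both facts follow from the Jordan curve theorem together with the definition of positive orientation. I also need $\max_\gamma|W-\rho|$ to be attained, or simply read as a supremum, since $\rho$ is continuous on $\gamma$ and $W$ is assumed to have boundary values for which \eqref{eq:pompeiu} makes sense. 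If that version of Cauchy's theorem is deemed too strong to quote, the fallback is to approximate $\gamma$ by inscribed polygons inside $U$ and pass to the limit using rectifiability.
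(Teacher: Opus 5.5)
Your proposal is correct and follows the paper's proof. It uses the same split $I=\frac{1}{2i}\oint_\gamma f\rho\,dz+\frac{1}{2i}\oint_\gamma f(W-\rho)\,dz$, residue calculus to identify the first term with $I_n$, and the ML bound on the remainder; the only difference is that the paper justifies Cauchy's theorem on a rectifiable Jordan curve via Walsh's version (analytic in $\Omega$, continuous on $\bar\Omega$, after subtracting principal parts), while you use the winding-number version, which suffices because $f$ is analytic on a neighbourhood of $\bar\Omega$.
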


\begin{proof}
On a rectifiable Jordan curve, Cauchy's theorem applies to functions
analytic in $\Omega$ and continuous on $\bar\Omega$ \cite{Walsh1933},
and the residue formula used next follows on subtracting from $f\rho$
its principal parts at the poles in $\Omega$.
Since $f$ is analytic in a neighbourhood of $\bar\Omega$ and the poles of $\rho$ avoid
$\gamma$, residue calculus gives
$\frac{1}{2i}\oint_\gamma f \rho \, dz = \pi \sum_{z_k \in \Omega}
c_k f(z_k) = I_n$: each interior pole contributes
$2\pi i\, c_k f(z_k)/2i$, and the exterior poles and the polynomial
part contribute zero. Subtracting this from \eqref{eq:pompeiu},
\[
I - I_n = \frac{1}{2i} \oint_\gamma f(z) \,
\bigl(W(z,\bar z) - \rho(z)\bigr)\, dz .
\]
Therefore
\[
|I-I_n|
\leq \frac12 \int_\gamma |f(z)|\, |W(z,\bar z)-\rho(z)|\, |dz|
\leq \tfrac12 |\gamma|\, \eps \max_{z\in\gamma}|f(z)|,
\]
which proves the estimate.
\end{proof}

Notice that Theorem~\ref{thm:cert} calls for the continuous supremum of
the residual on $\gamma$, which a finite sample does not deliver. A
fully rigorous certificate would require something more, such as an
adaptive or interval-based maximization along the boundary. What we can
say is that the independent validation-grid checks return residuals
consistent with those on the fitting grid. In fact the cubature error is
often much smaller than the indicator suggests, because the
contour-error integral enjoys a good deal of cancellation.

\paragraph{A dual interpolatory interpretation}
For pure area integrals there is a second way to look at the rule,
through the exterior area Cauchy transform
\[
   H(\xi) = \frac{1}{\pi}\iint_\Omega \frac{\dA(z)}{\xi-z},
   \qquad \xi \in \C \setminus \bar\Omega .
\]
This function is analytic outside $\bar\Omega$ and behaves like
$H(\xi)=\operatorname{area}(\Omega)/(\pi\xi)+O(\xi^{-2})$ at infinity.
The part of the fitted rational function that matters here is
\[
   r_n(z) = \sum_{z_k\in\Omega} \frac{c_k}{z-z_k}.
\]
Taking the Cauchy-kernel integrand $f_\xi(z)=1/(z-\xi)$ with $\xi$
outside $\bar\Omega$, the definition of $H$ and a direct evaluation of
the rule give
\begin{equation}
\label{eq:dual-cauchy}
   I(f_\xi) = -\pi H(\xi), \qquad
   I_n(f_\xi) = -\pi r_n(\xi), \qquad
   I(f_\xi)-I_n(f_\xi)=\pi\{r_n(\xi)-H(\xi)\}.
\end{equation}
In other words, the interior cubature rule is at the same time a
rational approximation to the exterior area Cauchy transform. For the
ellipse, where $H$ is known in closed form, one can check this identity
by hand.

The next statement makes this interpolatory reading precise. It is the
area analogue of the result in \cite[Section~11]{HorningTrefethen2026}.

\begin{proposition}[Conditional rational exactness]
\label{prop:dual}
Let $z_1,\ldots,z_n$ be the distinct interior cubature nodes, where
$n$ is their number, and set
\[
  r_n(z)=\sum_{k=1}^n\frac{c_k}{z-z_k},
  \qquad D(z)=r_n(z)-H(z).
\]
Suppose that $D$ has distinct zeros $s_1,\ldots,s_n$ in
$\C\setminus\bar\Omega$ and that the Cauchy matrix
\[
  C_{kj}=\frac{1}{z_k-s_j}
\]
is nonsingular. For any $f$ analytic in a neighbourhood of
$\bar\Omega$, let
\[
  q(z)=\sum_{j=1}^n\frac{\alpha_j}{z-s_j}
\]
be the unique rational function of this form satisfying
$q(z_k)=f(z_k)$. Then
\[
  I(q)=I_n(q)=I_n(f).
\]
\end{proposition}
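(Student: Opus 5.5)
The plan is to prove the two equalities separately; the first is where the assumption on the zeros of $D$ enters.

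\emph{Second equality, $I_n(q)=I_n(f)$.} The rule only samples its argument at the interior nodes. Since $q(z_k)=f(z_k)$ for every $k$,
\[
I_n(q)=\pi\sum_{k}c_k q(z_k)=\pi\sum_k c_k f(z_k)=I_n(f).
\]
Existence and uniqueness of $q$ is a linear-algebra statement. The interpolation conditions read $\sum_j C_{kj}\alpha_j=f(z_k)$, which is uniquely solvable because the Cauchy matrix $C$ is nonsingular.

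\emph{First equality, $I(q)=I_n(q)$.} Both $I$ and $I_n$ are linear functionals, so it suffices to check exactness on each basis function $1/(z-s_j)$. Since $s_j\in\C\setminus\bar\Omega$, each $1/(z-s_j)$ is analytic in a neighbourhood of $\bar\Omega$, so both $I$ and $I_n$ make sense on it. This basis function is exactly the Cauchy-kernel integrand $f_\xi$ with $\xi=s_j$. By the identity \eqref{eq:dual-cauchy},
\[
I(f_{s_j})-I_n(f_{s_j})=\pi\{r_n(s_j)-H(s_j)\}=\pi D(s_j)=0 .
\]
Summing with the coefficients $\alpha_j$ gives $I(q)=\sum_j\alpha_j I(f_{s_j})=\sum_j\alpha_j I_n(f_{s_j})=I_n(q)$.

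I would briefly re-derive \eqref{eq:dual-cauchy} for completeness. $I(f_\xi)=\iint_\Omega dA/(z-\xi)=-\pi H(\xi)$ by the definition of $H$. Also $I_n(f_\xi)=\pi\sum_k c_k/(z_k-\xi)=-\pi r_n(\xi)$. Both are direct evaluations.

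\emph{Main obstacle.} No step is deep; the work is bookkeeping. The points to check are that the zeros $s_j$ lie outside $\bar\Omega$, which is needed both for the analyticity of the basis functions and for $H$ to be defined there, and that the sign conventions in \eqref{eq:dual-cauchy} match. Distinctness of the $s_j$ is needed only so that the basis functions $1/(z-s_j)$ are independent and the interpolation problem is well posed. Nonsingularity of $C$ supplies the rest of the well-posedness.
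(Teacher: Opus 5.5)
Your proof is correct, but you reach the first equality by a different route from the paper.

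The paper works with contour integrals. It writes $I(q)=\frac{1}{2i}\oint_\gamma qH\,d\xi$ using Fubini and the Cauchy formula, and $I_n(q)=\frac{1}{2i}\oint_\gamma q r_n\,dz$ using residues, so that $I(q)-I_n(q)=-\frac{1}{2i}\oint_\gamma qD\,dz$. It then notes that $qD$ is analytic outside $\gamma$, because the poles of $q$ are cancelled by the zeros of $D$, and that $qD=O(z^{-2})$ at infinity. Pushing the contour to a large circle therefore kills the integral.

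You instead apply linearity to the pointwise identity \eqref{eq:dual-cauchy}, which gives directly $I(q)-I_n(q)=\pi\sum_j\alpha_jD(s_j)=0$. The two arguments compute the same quantity. Without assuming $D(s_j)=0$, the exterior residues of $qD$ at the $s_j$ are $\alpha_jD(s_j)$, and the paper's deformation produces exactly your sum.

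\textbf{What your version buys.} It needs less machinery: no Fubini on $\gamma\times\Omega$, no boundary values of $H$ on a merely rectifiable curve, no Cauchy theorem on the unbounded exterior region, and no decay estimate at infinity. It also makes plain that the hypotheses on the $s_j$ and on $C$ serve only the existence and uniqueness of $q$. Indeed, nonsingularity of $C$ already forces the $s_j$ to be distinct.

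\textbf{What the paper's version buys.} Its contour formulation mirrors the boundary error representation of Theorem~\ref{thm:cert}. It also extends without change to the variants discussed after the proposition. A zero of $D$ of order $m$ can carry a pole of $q$ of order $m$. When $\sum_k c_k=\operatorname{area}(\Omega)/\pi$, the improved decay $D=O(z^{-2})$ admits a $q$ with a constant term. In your framework these cases need separate steps: differentiating \eqref{eq:dual-cauchy} in $\xi$, and checking exactness on constants.
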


\begin{proof}
Since the poles of $q$ lie outside $\bar\Omega$, Fubini's theorem and
the Cauchy formula give
\[
\frac{1}{2i}\oint_\gamma q(\xi)H(\xi)\,d\xi
 =\frac{1}{2\pi i}\iint_\Omega
   \oint_\gamma \frac{q(\xi)}{\xi-z}\,d\xi\,dA(z)
 =\iint_\Omega q(z)\,dA(z).
\]
Residue calculus also gives
$I_n(q)=(2i)^{-1}\oint_\gamma q(z)r_n(z)\,dz$. Hence
\[
I(q)-I_n(q)
  =-\frac{1}{2i}\oint_\gamma q(z)D(z)\,dz.
\]
The exterior poles of $q$ are cancelled by the zeros of $D$, so $qD$
is analytic in the exterior of $\gamma$. Moreover,
\[
  q(z)=O(z^{-1}),\qquad D(z)=O(z^{-1}),
\]
and therefore $q(z)D(z)=O(z^{-2})$. Deforming $\gamma$ to a large
circle in the exterior shows that the large-contour contribution
vanishes. Thus
$I(q)=I_n(q)$, while the interpolation conditions give
$I_n(q)=I_n(f)$.
\end{proof}

The reading applies only when the effective residual happens to have
enough suitable exterior zeros and the associated Cauchy matrix is
nonsingular. Neither condition is guaranteed by the least-squares
construction, and in practice we identify the zeros a posteriori. The
exterior-pole and polynomial terms in \eqref{eq:ls} are auxiliary ---
they leave $I_n$ unchanged for analytic integrands --- which is why the
proposition works with $r_n$ alone; were one to use the full boundary
fit in an exterior deformation, the interpolant would have to cancel
these auxiliary poles as well. Expanding \eqref{eq:dual-cauchy} at
infinity, finally, gives
\[
   D(z)=\frac{\sum_k c_k-\operatorname{area}(\Omega)/\pi}{z}
        +O(z^{-2}),
\]
so that exact reproduction of the area improves $D$ to $O(z^{-2})$ and
allows broader normalizations at infinity. This is not needed for the
strictly proper $q$ of Proposition~\ref{prop:dual}, for which
$qD=O(z^{-2})$ already holds. The whole conditional interpretation
carries over to other weights, with $H$ replaced by the exterior Cauchy
transform of $w\dA$.

In our experience the results are insensitive to modest changes in the
polynomial degree $L$. The full auxiliary basis can nonetheless be
numerically redundant: a remote exterior pole, for instance, is
effectively a pole at infinity, and its column may duplicate part of
the polynomial span. The rank-revealing solve removes such redundancy
while leaving the interior-pole rule untouched, and so the conditioning
of the auxiliary fit need not say anything about the conditioning of the
cubature rule itself.

There are two practical points worth stating explicitly. First, the identity
\eqref{eq:pompeiu} needs $W$ to be single-valued on $\bar\Omega$ and
smooth apart from integrable singularities; the antiderivatives of
section~\ref{sec:potential} are chosen to be single-valued and bounded
at the target, so that no correction terms are needed whether the
target lies inside or outside $\Omega$. Second, with $\gamma$
positively oriented the factor $\pi$ in \eqref{eq:rule} is just
$2\pi i/(2i)$, and the rule reproduces $\mathrm{area}(\Omega)$ when
$f \equiv 1$ --- a convenient check to run at execution time. One must
still watch for Froissart-type pole--zero pairs. A spurious interior
pole with a small least-squares coefficient may do no harm, though this
is not guaranteed in an ill-conditioned basis; a confluent pair with
large opposite weights, on the other hand, can be entirely genuine and
is best handled through the local moments of section~\ref{sec:qd}.

Beyond the ability to evaluate $W$ on the boundary, nothing else in the
construction depends on the weight at all; section~\ref{sec:potential}
makes use of two different $\bar\partial$-antiderivatives for potential
kernels.

\section{Classical examples}
\label{sec:classical}

Unless we say otherwise, relative error means $|I_n-I|/|I|$. When a
maximum is taken over several integrands, we switch to the scaled error
$|I_n-I|/\max(1,|I|)$, so that a zero or very small reference value
cannot distort the comparison. Table~\ref{tab:summary} collects the
smooth, quadrature-domain, and polygonal examples of this and the next
two sections; the volume potentials of
section~\ref{sec:potential} have their own tables.

\begin{table}[t]
\centering
\caption{The smooth, quadrature-domain, and polygonal examples: the
number $n$ of nodes carrying the rule and the best relative error
(worst case over the test integrands, where several are used), at the
degrees used in the corresponding experiments.}
\label{tab:summary}
\begin{tabular}{lcc}
\toprule
example & $n$ & rel.\ err.\\
\midrule
disk, mean-value rule                     & 1   & $3.6\times10^{-15}$\\
ellipse, focal Gauss--Gegenbauer          & 38  & $8.0\times10^{-16}$\\
starfish, adaptive two-dimensional reference & 55  & $4.1\times10^{-16}$\\
quadrature domain, confluent pair         & 2   & $8.6\times10^{-11}$\\
Neumann's oval, separated nodes           & 2   & $8.8\times10^{-15}$\\
square, closed form / tensor Gauss        & 159 & $1.6\times10^{-11}$\\
\bottomrule
\end{tabular}
\end{table}

\paragraph{Disk} On a circle of radius $R$ centered at $z_c$ the
boundary data is already rational, since
$\bar z = \bar z_c + R^2/(z - z_c)$ on $\gamma$. The only non-negligible
interior contribution collapses onto the center, with weight $\pi R^2$,
while whatever additional poles the algorithm produces carry negligible
weight. Rational approximation of the boundary data thus reproduces the
classical mean value property, exactly as one would hope.

\begin{figure}[t]
\centering
\includegraphics[width=\textwidth]{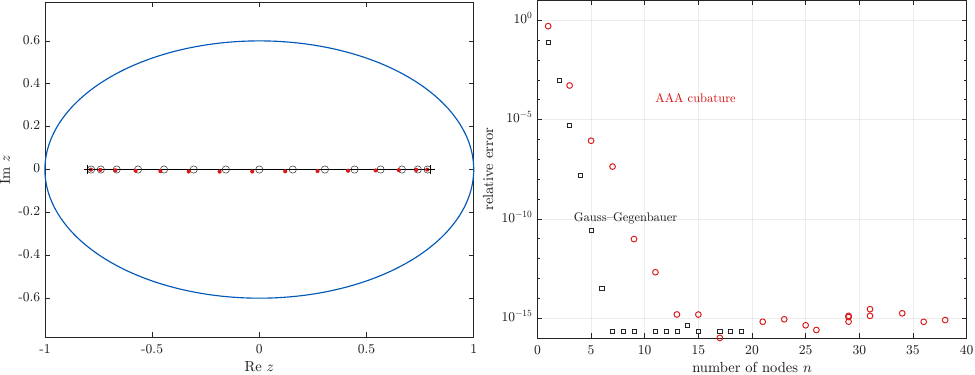}
\caption{Ellipse with semiaxes $(1, 0.6)$. Left: the AAA cubature
nodes (dots) cluster near the focal segment; open circles are the
Gauss--Gegenbauer points of the exact reduced rule; crosses mark the
foci. Right: relative error of the AAA cubature rule and of
Gauss--Gegenbauer on the focal segment, for $f = e^z$.}
\label{fig:ellipse}
\end{figure}

\paragraph{Ellipse} Take the ellipse with semiaxes $(a, b) = (1, 0.6)$
and $c^2 = a^2 - b^2$. Here the Schwarz function has a branch cut along
the focal segment $[-c, c]$, and shrinking the contour of
\eqref{eq:pompeiu} down onto that cut gives the exact reduction
\begin{equation}
\label{eq:focal}
\iint_E f \dA \;=\; \frac{2ab}{c^2} \int_{-c}^{c} f(x)\,
\sqrt{c^2 - x^2}\, dx ,
\end{equation}
a Gegenbauer (Chebyshev second-kind) weight on the focal segment. We
verified \eqref{eq:focal} independently. As Figure~\ref{fig:ellipse}
shows, the AAA nodes cluster near the focal segment even though the
algorithm is never told where that segment is, and the figure compares
their convergence with Gauss--Gegenbauer quadrature.

This comparison deserves a word of interpretation. Gauss--Gegenbauer
converges faster, as one would expect; its rate is set by analyticity
relative to the focal segment and is superexponential for $e^z$. The
AAA nodes and weights, by contrast, are built with no knowledge of $f$,
and their residual-controlled worst-case behavior reflects rational
approximation of the boundary data. The error for any one integrand
depends further on its size on the boundary and on its analytic
continuation, and can be far smaller than the worst case thanks to
cancellation in the contour-error integral. It is worth recalling that
in the one-dimensional near-best setting of \cite{HorningTrefethen2026},
Gauss-type rules carry twice the exponent of the corresponding rational
construction --- the Gauss-quadrature factor of two. So when the exact
reduction \eqref{eq:focal} happens to be known, the classical rule
holds the advantage.

But what if the geometric information is imperfect? Figure~\ref{fig:perturbed}
shows the answer. Gauss--Gegenbauer on the focal segment of the
unperturbed ellipse stalls at the level $O(p)$ of the geometry error,
whereas the AAA rule, working from the true boundary data, drives on
down to machine precision. The classical rule is only as accurate as
its geometric model; the AAA rule adapts to the perturbed boundary as
it actually is.

\begin{figure}[t]
\centering
\includegraphics[width=\textwidth]{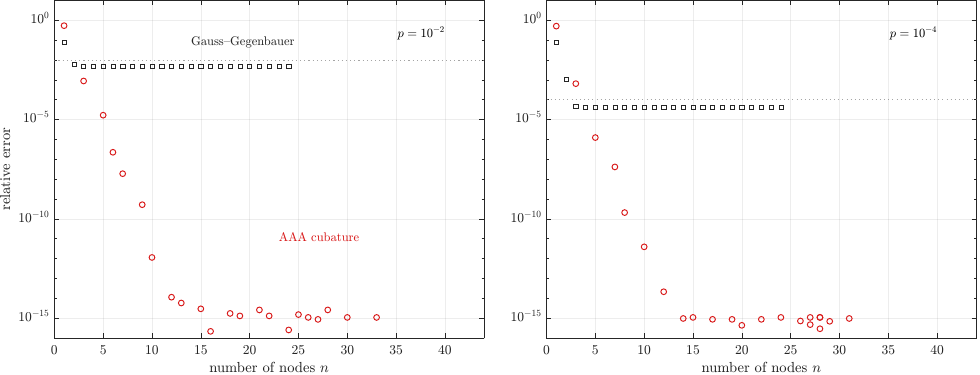}
\caption{The ellipse perturbed by $p\, e^{3it}$, with $p = 10^{-2}$
(left) and $p = 10^{-4}$ (right); function $f = e^z$.
Gauss--Gegenbauer on the focal segment of the underlying ellipse
floors at the geometry error $O(p)$ (dotted line at height $p$); the
AAA cubature rule, built from the true boundary data, converges to
machine precision.}
\label{fig:perturbed}
\end{figure}

\begin{figure}[t]
\centering
\includegraphics[width=\textwidth]{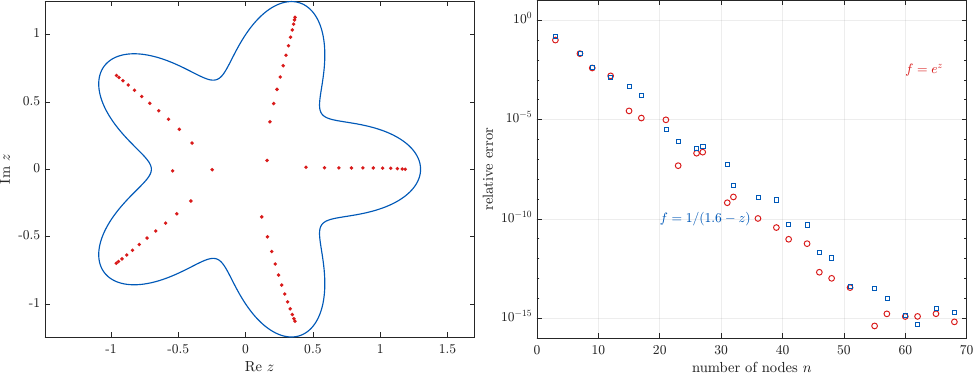}
\caption{Starfish domain $r(t) = 1 + 0.3\cos 5t$. Left: the cubature
nodes trace a five-armed mother-body-like analytic skeleton of the
domain. Right: relative error of the AAA cubature rule for an entire
integrand and for an integrand with a pole at $z = 1.6$.}
\label{fig:starfish}
\end{figure}

\paragraph{Starfish} For the domain $r(t) = 1 + 0.3 \cos 5t$ the
Schwarz function is no longer elementary, and its interior
singularities weave a more intricate skeleton. The outcome is shown in
Figure~\ref{fig:starfish}: the nodes trace a five-armed star inside the
domain, a candidate mother-body-like skeleton of $\Omega$
\cite{Zidarov,Gustafsson1998}, and the rule converges geometrically all
the way to machine precision. As an independent check, the boundary
reference agrees with two-dimensional adaptive integration over
$\Omega$, carried out separately on the real and imaginary parts.

\section{Quadrature domains}
\label{sec:qd}

Recall that a domain $\Omega$ is a quadrature domain if there are
finitely many points $a_j \in \Omega$ and coefficients $c_{jk}$, only
finitely many of them nonzero, for which
\[
\iint_\Omega f \dA \;=\; \sum_j \sum_{k \ge 0} c_{jk} f^{(k)}(a_j)
\qquad \text{for all integrable analytic } f,
\]
or, equivalently, if its Schwarz function is meromorphic in $\Omega$
\cite{AharonovShapiro1976,Sakai1982,GustafssonShapiro2005}. The
simplest nontrivial example is $\Omega = \phi(\mathbb{D})$ with
$\phi(\zeta) = \zeta + a\zeta^2$, $0 < a < 1/2$, whose Schwarz
function has a single double pole at $0$ and therefore obeys the exact
identity
\begin{equation}
\label{eq:asidentity}
\iint_\Omega f \dA \;=\; \pi \bigl[ (1 + 2a^2)\, f(0) + a\, f'(0) \bigr].
\end{equation}
We confirmed \eqref{eq:asidentity} against an independent boundary
reference.

\begin{figure}[t]
\centering
\includegraphics[width=\textwidth]{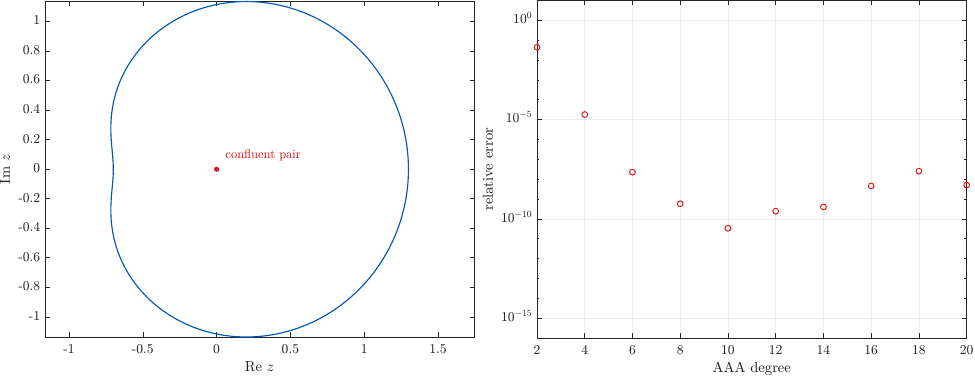}
\caption{The quadrature domain $\phi(\mathbb{D})$,
$\phi(\zeta) = \zeta + 0.3\zeta^2$. Left: the discovered rule consists
of two nodes $1.4 \times 10^{-5}$ apart with large opposite weights, a
confluent pair representing the node of order two at the origin in
\eqref{eq:asidentity}. Right: relative error against AAA degree; the
dominant two-node confluent structure first appears at degree $2$ and
persists across the displayed range; the right panel shows the
corresponding integration error.}
\label{fig:qd}
\end{figure}

How does a rule with simple poles reproduce a derivative functional?
The AAA rule recovers \eqref{eq:asidentity} through what we call a
\emph{confluent pair}: since the fit uses only simple poles, the double
pole of the Schwarz function is mimicked by two nearby nodes carrying
large, nearly opposite weights. It is the zeroth and first local
moments that recover the two coefficients of the exact identity; the
individual nodes and weights are not the stable quantities.

The representation is accurate, if less stable than a rule with
well-separated nodes, and the large opposite weights are nothing but a
finite-difference stand-in for the derivative functional in
\eqref{eq:asidentity}. For a cluster $C$ with center $z_C$, the
quantities that are actually stable are the local moments
\[
   \mu_j = \sum_{k\in C} \lambda_k (z_k-z_C)^j,
\]
not the individual weights. Expanding $f$ about $z_C$,
\[
   \sum_{k\in C} \lambda_k f(z_k)
   = \sum_{j=0}^{r-1} \frac{\mu_j}{j!} f^{(j)}(z_C)
     + O\!\left(\max_{z\in C}|f^{(r)}(z)|
       \sum_{k\in C}|\lambda_k|\,|z_k-z_C|^r\right),
\]
one sees that a nearly confluent simple-pole rule ought to be
interpreted, and where possible evaluated, in this derivative-node
basis. A practical, if heuristic, compression rests on the two
scale-invariant diagnostics
\[
\eta_{\rm sep}(C)
 =\frac{\max_{j,k\in C}|z_j-z_k|}
        {\operatorname{diam}(\Omega)},
\qquad
\eta_{\rm cancel}(C)
 =\frac{|\sum_{k\in C}\lambda_k|}
        {\sum_{k\in C}|\lambda_k|}.
\]
We flag a cluster when both of these are small and its share of
$\Lambda_n$ is large. A production code would need to fix and test
thresholds for the diagnostics; here they serve only to guide
interpretation. Once a cluster has been spotted we set
\[
   z_C=\frac{\sum_{k\in C}|\lambda_k|z_k}
             {\sum_{k\in C}|\lambda_k|},
\]
and evaluate it through the moments $\mu_j$ rather than by summing the
raw weights directly. For a general smooth domain the same computation
returns an approximate quadrature identity together with a validation
residual, a forward complement to the inverse shape-from-moments
problem \cite{GHMP2000}.

\begin{figure}[t]
\centering
\includegraphics[width=\textwidth]{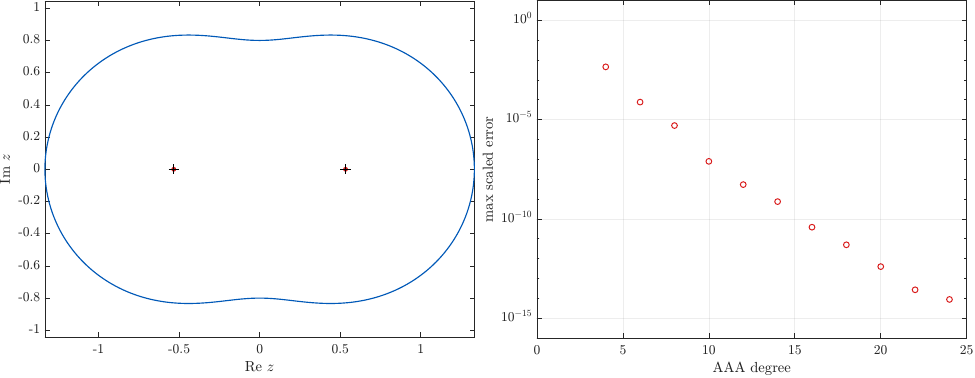}
\caption{Neumann's oval, $\phi(\zeta) = \zeta/(1 - 0.25\zeta^2)$.
Left: the two computed nodes (dots) and the exact nodes $\pm 8/15$ of
\eqref{eq:neumann} (crosses). Right: maximal scaled error over five
integrands against the AAA degree; the number of poles placed
inside $\Omega$ is exactly two at every degree.}
\label{fig:neumann}
\end{figure}

A quadrature domain with \emph{separated} simple nodes behaves better
still. Neumann's oval is $\Omega = \phi(\mathbb{D})$ with
$\phi(\zeta) = \zeta/(1 - q^2 \zeta^2)$ and $0 < q < 1$
\cite{GustafssonShapiro2005}. This map is univalent on $\mathbb{D}$,
its Schwarz function has two simple poles, and residue calculus on the
unit circle yields the exact identity
\begin{equation}
\label{eq:neumann}
\iint_\Omega f \dA \;=\; \pi c \,\bigl[ f(x_*) + f(-x_*) \bigr],
\qquad
x_* = \frac{q}{1 - q^4}, \quad
c = \frac{1 + q^4}{2\,(1 - q^4)^2} ,
\end{equation}
with both nodes $\pm x_*$ lying interior to $\Omega$.
Figure~\ref{fig:neumann} treats the case $q=1/2$, for which $x_*=8/15$
and $c=136/225$. The two nodes come out correctly at every degree
shown, and raising the degree merely sharpens the exterior
approximation, driving the integration error to machine precision. This
separated rule is both more accurate and better conditioned than the
confluent one above.

\section{A domain with corners}
\label{sec:square}

The error theorem holds for any rectifiable Jordan curve, and the
experiments in this paper all use smooth or piecewise-analytic
boundaries, with parametrizations and sampling grids fine enough to
resolve the boundary data.

At a corner, though, the character of the approximation problem
changes. Along a straight edge through $a$ in the direction
$e^{i\theta}$ we have $\bar z=\bar a+e^{-2i\theta}(z-a)$, but two
adjacent edges continue to different functions. The singularities
thrown up at the corners bring exponential clustering of the poles and
root-exponential convergence $O(e^{-C\sqrt n})$
\cite{GopalTrefethen2019} --- the familiar signature of lightning
approximation. For the square the observed nodes line up along the two
diagonals, just as a mother-body configuration would suggest
\cite{Gustafsson1998}.

Figure~\ref{fig:square} shows the computation for the square
$[-1,1]^2$, with the boundary sampling graded toward the corners. The
nodes cluster at the corners and hug the diagonals, while the exterior
poles run off along the corresponding corner rays. The closed-form
reference for $e^z$ and an independent tensor Gauss--Legendre reference
agree to machine precision. As the figure shows, the cubature error
falls root-exponentially, exactly as lightning approximation predicts.

\begin{figure}[t]
\centering
\includegraphics[width=\textwidth]{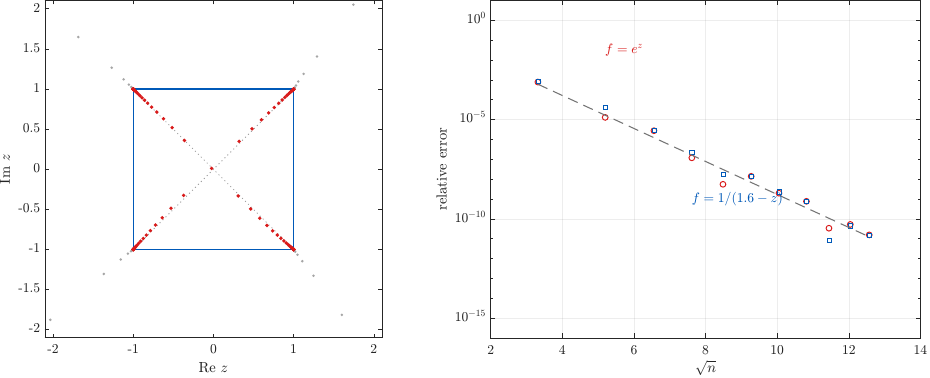}
\caption{The square $[-1,1]^2$. Left: cubature nodes (dots) align with
the two diagonal candidate mother-body segments, with exponential
clustering toward the corners; the gray dots are exterior poles, which
also line up on the corner rays of the exterior skeleton. Right: relative
error against $\sqrt{n}$ for two integrands; the dashed line is
the least-squares fit $\propto e^{-1.90\sqrt{n}}$, the
root-exponential regime of lightning approximation.}
\label{fig:square}
\end{figure}

\section{Nearly singular volume potentials}
\label{sec:potential}

As a last application, consider the two-dimensional Newtonian potential
of an analytic integrand $f$ over $\Omega$,
\begin{equation}
\label{eq:newton}
\Phi(z_0) \;=\; \iint_\Omega f(z)\, \log|z - z_0| \dA ,
\end{equation}
which is singular when $z_0\in\Omega$ and nearly singular when $z_0$
sits close to $\gamma$. For general densities one commonly resorts to
volume meshes with local corrections, or to density interpolation
\cite{AndersonEtAl2024,ShenSerkh2024}; for analytic $f$, however, the
present construction again needs the boundary data alone. The
$\bar\partial$-antiderivative
\begin{equation}
\label{eq:Wlog}
W(z, \bar z) \;=\; (\bar z - \bar z_0)\bigl( \log|z - z_0| -
\tfrac{1}{2} \bigr)
\end{equation}
is single-valued and continuous for $z_0$ inside or outside $\Omega$,
provided only that $z_0\notin\gamma$, and it treats interior and
exterior targets on the same footing. Near $\gamma$ the sampling must
of course resolve a boundary feature on the scale of the target
distance. For $z_0\in\Omega$ we apply \eqref{eq:pompeiu} after excising
a disk of radius $\epsilon$ about $z_0$; since
$|W|=O(\epsilon\log\epsilon)$ on the small circle, its contribution is
$O(\epsilon^2\log\epsilon)$ and disappears as $\epsilon\to0$, the
logarithmic kernel being locally integrable. The same argument takes
care of the bounded Cauchy antiderivative introduced below.

\begin{figure}[t]
\centering
\includegraphics[width=\textwidth]{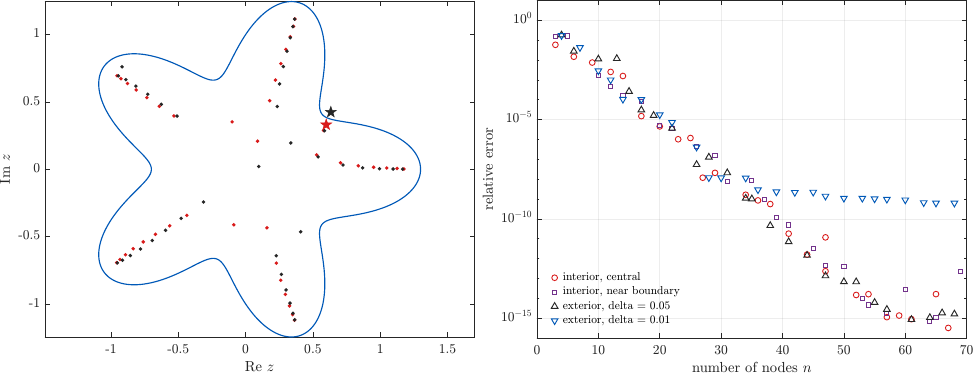}
\caption{Newtonian potential over the starfish body. Left: nodes for
an interior target near the boundary (red star: the nodes cluster at
the target) and for an exterior target at distance $0.05$ (black star:
the nodes cluster at a candidate reflected point inside the body); both
node sets also trace the mother-body-like analytic skeleton. Right: relative error against
the number of nodes for four targets; the exterior case at distance
$0.01$ is limited here by the $M = 800$ boundary sampling (see
Table~\ref{tab:resM}).}
\label{fig:potential}
\end{figure}

\begin{table}[t]
\centering
\caption{Volume potential \eqref{eq:newton} over the starfish body:
best relative errors for four targets, with $M = 800$ boundary
samples.}
\label{tab:res}
\begin{tabular}{lcc}
\toprule
target & rel.\ err. & $n$\\
\midrule
interior, central        & $2.2\times 10^{-16}$ & 67\\
interior, near boundary  & $6.5\times 10^{-16}$ & 64\\
exterior, $\delta=0.05$  & $8.5\times 10^{-16}$ & 61\\
exterior, $\delta=0.01$  & $6.2\times 10^{-10}$ & 68\\
\bottomrule
\end{tabular}
\end{table}

\begin{table}[t]
\centering
\caption{The exterior target at distance $\delta = 0.01$ as the
boundary sampling is refined: the accuracy floor belongs to the
sampling, not to the method.}
\label{tab:resM}
\begin{tabular}{ccc}
\toprule
$M$ & rel.\ err. & $n$\\
\midrule
800  & $6.2\times 10^{-10}$ & 68\\
1600 & $3.0\times 10^{-14}$ & 68\\
3200 & $2.2\times 10^{-16}$ & 63\\
\bottomrule
\end{tabular}
\end{table}

Figure~\ref{fig:potential} and Tables~\ref{tab:res} and~\ref{tab:resM}
collect the results. Every well-resolved target reaches machine
precision, and for the closest exterior target, refining the boundary
grid removes what at first looks like an accuracy floor.

The geometry of the nodes is itself informative. For an interior target
the boundary data \eqref{eq:Wlog} continues inward with a branch point
at $z_0$, and the nodes duly cluster at the target. For an exterior
target they cluster instead near a point consistent with a solution of
$S(z)=\bar z_0$ --- the relevant reflected branch in this experiment ---
which for a circle is exactly the classical inverse point of Kelvin. So
the left panel of Figure~\ref{fig:potential} offers numerical evidence
of an image-point structure that was in no way imposed in advance.

The construction is kernel-independent in a precise sense: only the
boundary antiderivative changes. Replacing \eqref{eq:Wlog} by
$W = (\bar z - \bar z_0)/(z - z_0)$, which is bounded at $z_0$, produces
the area Cauchy integral $\iint_\Omega f(z)/(z - z_0) \dA$. Nothing else
in the construction is altered, and once more the numerical check comes
in near machine precision.

This same area Cauchy integral delivers a derivative of the potential.
For $z_0$ outside $\bar\Omega$, ordinary differentiation under the
integral sign applies. For $z_0\in\Omega$, one may split off a disk
centered at $z_0$ and pass to the limit, since $(z-z_0)^{-1}$ is locally
integrable. As
\[
  \frac{\partial}{\partial z_0}\log|z-z_0|
  =-\frac{1}{2(z-z_0)},
\]
we obtain
\[
\iint_\Omega \frac{f(z)}{z - z_0} \dA \;=\;
-2\, \frac{\partial \Phi}{\partial z_0} ,
\]
where $\partial/\partial z_0 = \tfrac12(\partial/
\partial x_0-i\,\partial/\partial y_0)$. For a real-valued potential
this reproduces the usual gradient components; for complex $f$ it is to
be read componentwise.

\section{Discussion}
\label{sec:discussion}

We have followed a single chain of reasoning from the Cauchy--Green
identity to boundary rational approximation, to interior poles and
residues, and finally to cubature. The error theorem covers analytic
integrands and a fixed admissible $\bar\partial$-antiderivative; it says
nothing about general continuous integrands. The weights may be complex
or signed, the sampled residuals stay mere indicators until the
continuous residual has been bounded, and confluent clusters have to be
evaluated through their moments.

The main open questions all concern the geometry of the nodes. Our
starfish and potential experiments give numerical evidence of a
candidate mother-body-like skeleton and of reflected image points, but
they do not prove that the displayed sets are the exact objects of
potential theory. On the square the measured rate is consistent with
the root-exponential behavior of lightning approximation
\cite{GopalTrefethen2019}, yet a proof in this setting is another
matter. For the derivative identities, a backward-error analysis of the
moment compression is still missing. And although in the dual
experiment a rank-revealing solve strips away a redundant auxiliary
direction without disturbing the interior-pole rule, this is encouraging
evidence rather than a uniform stability theorem. Each of these is an
invitation for further work.

Several extensions suggest themselves. The boundary identity carries
over directly to domains with several boundary components, though we
have not tested the resulting pole classification here. General
polygons and curved corners will demand the same care with graded
sampling as the square did. Other kernels will call for their own
$\bar\partial$-antiderivatives, and families of targets for a
set-valued AAA approximation with shared poles. A genuinely
three-dimensional analog would need some replacement for the planar
residue calculus, and that lies well beyond the present paper.

All results were generated using MATLAB based on Chebfun's \texttt{aaa}. The scripts are publicly available at \url{https://github.com/zavala92/cubature_aaa}.

\section*{Acknowledgments}

We are grateful to Nick Trefethen for helpful comments on an earlier
draft, particularly concerning the analyticity assumptions and the role of
boundary approximation, and to Andrew Horning for discussions of the
dual interpretation and numerical stability.

\end{document}